\newcommand{\restrict}{\!\upharpoonright\!}
\newtheorem{theorem}{Theorem}[section]
\newtheorem{lemma}[theorem]{Lemma}
\newtheorem{claim}[theorem]{Claim}
\newtheorem{proposition}[theorem]{Proposition}
\theoremstyle{definition}
\newtheorem{definition}[theorem]{Definition}
\newtheorem{remark}[theorem]{Remark}
\def\sat{\models}
\def\QQ{\mathbb Q}
\def\emb{\preceq}
\DeclareMathOperator\tp{tp}
\DeclareMathOperator\lh{length}
\DeclareMathOperator\acl{acl}
\DeclareMathOperator\dcl{dcl}
\DeclareMathOperator\Ded{Ded}
\DeclareMathOperator\Pri{Pr}
\DeclareMathOperator\Span{span}
\newcommand\iseg{\sqsubset}
\begin{document}
\keywords{o-minimality, model theory, small extensions}
\subjclass[2000]{Primary 03C64; Secondary 03C50}

\title{Maximal small extensions of o-minimal structures}
\author{Janak Ramakrishnan}

\begin{abstract}
A proper elementary extension of a model is called small if it
  realizes no new types over any finite set in the base model.  We answer a
  question of Marker, and show that it is possible to have an o-minimal
  structure with a maximal small extension.  Our construction yields such a
  structure for any cardinality.  We show that in some cases, notably when the
  base structure is countable, the maximal small extension has maximal possible
  cardinality.
\end{abstract}

\maketitle

\section{Introduction}
Kueker, referenced in \cite{HrSh91}, defines:

\begin{definition}
Let $M\prec N$ be models.  $N$ is a \emph{small extension of $M$} if, for any
$a\in N$ and finite $A\subset M$, the type $\tp(a/A)$ is realized in $M$.
\end{definition}

Kueker asked the question: for a general $M$, is there a ``Hanf number''
$\lambda$ such that, if $M$ has a small extension of any cardinality below
$\lambda$, $M$ has small extensions of every cardinality?  Hrushovski and Shelah
\cite{HrSh91} answered this question for superstable $M$ -- in a countable
theory it is $\beth_{\omega_1}$.

It is sometimes more convenient to work with a ``maximal'' small extension -- a
small extension that does not imply the existence of small extensions of every
cardinality, and has maximal cardinality among small extensions like this.  The
existence of a maximal small extension gives the Hanf number as two more than
the cardinality of this extension.  Marker \cite{Marker86} showed that any
maximal small extension of an o-minimal structure could have cardinality at most
$2^{|M|}$.  Marker's argument uses the fact that there are at most $2^{|M|}$
types over $M$, so that an extension of greater cardinality would have to
realize at least one type more than once.  Since there are actually at most
$\Ded(|M|)$ types over $M$, where
\[
\Ded(\alpha)=\sup\{|\bar Q| : Q\text{ a linear order,
}|Q|\le\alpha\}
\]
and $\bar Q$ denotes the completion of the linear order $Q$, Marker's argument
shows that a maximal small extension must have cardinality at most $\Ded(|M|)$.

Recently, Kudaibergenov \cite{Kudaibergenov8} has shown that for weakly
o-minimal theories with atomic models (a class that includes o-minimal
theories), the above result can be tightened to say that a maximal small
extension must have cardinality at most $2^{|T|}$, where $|T|$ is the cardinality
of the theory.

Most examples of o-minimal structures either have no small extensions or
unboundedly many -- in a pure dense linear order, every extension is small.  In
the rationals as an ordered group, no extension is small.  In fact, no
non-trivial examples of o-minimal structures with both small and non-small
extensions were known (obvious ``glueing'' examples can be constructed).

In this paper, we construct the first known example of an o-minimal structure
with a maximal small extension.  This construction generalizes to construct such
examples for all cardinalities.  In the countable case, our maximal small
extension has cardinality $2^{\aleph_0}$, which is as large as possible.  For
some larger cardinalities, our maximal small extension will have cardinality
equal to the corresponding Dedekind number.  In general, though, we cannot show
optimality, although for example the Generalized Continuum Hypothesis or a
similar assumption would make our maximal small extensions have greatest
possible cardinality.

The results in \cite{Kudaibergenov8} imply that if $M$ is a structure with a
maximal small extension and $|M|=\alpha$ for some cardinal $\alpha$, then the
cardinality of the theory $T$ must be at least large enough that
$2^{|T|}>\alpha$.  Again, for each cardinal $\alpha$ our construction may give
$M$ with minimal $|T|$, subject to set-theoretic considerations.

Thanks to Thomas Scanlon for reading and commenting on an earlier version of
this paper, as well as to Charles Smart for discussions that led to the proof of
the cardinality of $N$.

\section{Types}

Given $p,q\in S(M)$, say that $q$ is \emph{definable} from $p$ if for any $c\sat
p$ there is $d\sat q$ with $d\in\dcl(Mc)$.

\begin{definition} Given a model, $M$, and a set, $A\subseteq M$,
let a type $p\in S_1(M)$ be \emph{$A$-finite} iff for some finite $\bar b\in A$,
the type $p\restrict\bar b$ generates $p$.  The type $p\in S(M)$ is \emph{almost
  $A$-finite} iff there exists an $A$-finite type that is definable from $p$.
\end{definition}

Since order-type implies type in o-minimal theories, $A$-finiteness has an
interpretation in the order -- $\dcl(\bar b)$ is dense in $M$ near $c$ a
realization of $p$, for $\bar b\subseteq A$ the witness to $A$-finiteness.
Considering this interpretation, we have:
\begin{remark}
Let $M$ be o-minimal and $N$ an elementary extension of $M$.  If $N$ realizes no
$M$-finite types then $N$ is a small extension of $M$.
\end{remark}

We recall here a classification of types given in \cite{Marker86}:

\begin{definition} A $1$-type, $p\in S(A)$, $A=\acl(A)$, is \emph{non-principal}
  iff $p$ is not algebraic, $p$ contains formulas of the form $x<a$ and $a<x$,
  and for each formula of the form $a<x$, there is $b\in A$ with $b<x$ in $p$,
  and similarly for $x<a$.  $p$ is \emph{principal} iff it is not algebraic and
  not non-principal.  A non-principal $p$ is \emph{uniquely realizable} if the
  prime model realizing $p$ has just one realization of $p$.  \end{definition}

\section{Existence of Maximal Small Extensions}

\begin{proposition}
For every $\alpha$, there is an o-minimal structure, $M$,
$|M|=\alpha$, with small extensions but not unboundedly large small
extensions.  Moreover, if $\alpha$ is of the form $\beta^{<\lambda}$,
for some $\lambda$, a small extension can be found of cardinality
$\beta^\lambda$.
\end{proposition}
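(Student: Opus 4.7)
The plan is to construct $M$ of cardinality $\alpha$ so that its non-algebraic, non-$M$-finite $1$-types are parametrized by the branches of a controlled tree, and so that each such type is realized at most once in any small extension. Fix $\beta$ and $\lambda$ with $\beta^{<\lambda}=\alpha$ (for general $\alpha$, one can take $\beta=\alpha$ and $\lambda=\omega$, which gives $\beta^{<\lambda}=\alpha$). Let $T=\beta^{<\lambda}$, the tree of functions $t\colon\mu\to\beta$ with $\mu<\lambda$, totally ordered lexicographically, and realize $T$ as a dense subset of a linear order, assigning to each internal node $t$ an interval $I_t$ so that the children of $t$ densely refine $I_t$. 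The underlying structure of $M$ is this ordered set enriched with constants for the nodes and unary predicates marking the levels, chosen so that $M$ is o-minimal.

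The first task is to classify the $1$-types of $M$. I expect the algebraic types to be the nodes of $T$; the principal types to be pinned down as $M$-finite by the level predicates; and the non-algebraic, non-$M$-finite types to be exactly the \emph{branch types} $p_b$, one for each $b\in\beta^\lambda$. Each $p_b$ is defined by the nested family of intervals $(I_{b\restrict\mu})_{\mu<\lambda}$ and fails to be $M$-finite because no finite restriction of $p_b$ specifies the whole branch $b$. Realizing each $p_b$ once produces an elementary extension $N$ of $M$ of cardinality $\beta^\lambda$, which is small by the remark preceding this proposition.

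To conclude the ``not unboundedly large'' clause, I would show that every small extension of $M$ embeds into $N$, which bounds its cardinality by $\beta^\lambda$. The key ingredient is that each $p_b$ is \emph{uniquely realized} in every small extension: if $x<y$ both realized $p_b$ in some small extension $N'$, then every element of the open interval $(x,y)$ in $N'$ would realize $p_b$ too, but the auxiliary structure on $M$ should force $(x,y)$ to contain an element realizing an $M$-finite type---a ``next-level'' node along the branch $b$, say---violating smallness. Combined with the classification of types, this forces any small extension of $M$ to pick at most one realization per branch, giving cardinality at most $\beta^\lambda$.

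The main obstacle I anticipate is designing the auxiliary structure on $M$ so that simultaneously (a) $M$ and $N$ are o-minimal, (b) the only non-algebraic, non-$M$-finite types are the branch types, and (c) duplicate realizations of any branch type force an $M$-finite type to appear. These three requirements pull against each other: too little structure permits padding a linear order between two realizations of a branch type (destroying unique realizability), while too much structure either breaks o-minimality or collapses branch types into $M$-finite ones. Threading this needle, and then verifying the cardinality $|N|=\beta^\lambda$, is the heart of the argument.
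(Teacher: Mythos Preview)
Your plan identifies the right skeleton---a tree $\beta^{<\lambda}$ whose branches parametrize the non-$M$-finite $1$-types---but the proposal has a genuine gap exactly where you flag it: you have no mechanism forcing unique realizability of branch types. In a purely order-theoretic structure (a linear order with constants), nothing prevents a small extension from inserting an entire dense linear order into the cut determined by a single branch $b$; every point of that interval still realizes $p_b$, and no $M$-finite type is forced to appear between two realizations. Your suggestion that ``the auxiliary structure on $M$ should force $(x,y)$ to contain an element realizing an $M$-finite type'' is precisely the missing idea, and adding unary level predicates does not supply it---indeed, such predicates will typically define infinite discrete sets of nodes and destroy o-minimality outright.

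The paper resolves this by working in a \emph{divisible ordered abelian group}: $M=G^{<\lambda}$ sits inside $G^\lambda$ with lexicographic order and componentwise addition, and the language is that of an ordered group with constants for $Q^{<\lambda}$, where $Q$ is a dense divisible subgroup of $G$. The group operation is what enforces unique realizability: if $c$ and $d$ both realize the same non-principal type over $M$, then $d-c$ realizes the principal type just above $0$, which is $\emptyset$-finite (hence $M$-finite), so a small extension cannot contain both. The classification of types then runs through the dichotomy ``cofinally many components outside $Q$'' versus ``eventually all components in $Q$,'' and the cardinality computation for $N$ is carried out as $\QQ$-linear algebra inside $G^\lambda$, using a complement $W$ with $G=Q\oplus W$ to exhibit $\gamma^\lambda$ independent elements over $M+Q^\lambda$. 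The algebraic structure is not decoration---it is the engine that makes all three of your requirements (a), (b), (c) compatible.
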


\begin{proof}
We give a construction of models $M$ and $N$, with $M\emb N$, and $N$
a maximal small extension of $M$.  We then verify the sizes of $M$ and
$N$.

Let $G$ be a divisible ordered abelian group, $\lambda$ an ordinal,
$Q$ a dense divisible subgroup of $G$.  Let $Q'=G\setminus Q$.

Let $M=G^{<\lambda}$.  We consider $M$ as a subgroup of $G^\lambda$, which is
ordered lexicographically and equipped with group structure component-wise.  Let
our language be that of an ordered group, extended by constants for every
element of $Q^{<\lambda}$.  We will build $N$ in stages.

\begin{itemize}

\item Let $M_0=M$.

\item Given $M_i$, choose $a\in G^\lambda$ such that any $b\in
  \dcl(aM_i)\setminus M_i$ has cofinal components in $Q'$.  Let
  $M_{i+1}=\Pri(M_ia)$ (the prime model containing $M_i$ and $a$).  If no such
  $a$ exists, then we halt.
\item Take unions at limits.

\end{itemize}

This construction must halt at some point, since there are
$\le|G|^\lambda$ elements to add.  Let the union of the $M_i$'s be $N$.

$M$ is o-minimal, since it is a divisible ordered abelian group, and each $M_i$
and $N$ is an elementary extension, since they are also divisible ordered
abelian groups and this theory has quantifier elimination.

It remains to be shown that $N$ is a small extension of $M$, and that
there is no larger small extension of $M$.  In fact, we show that
every small extension of $M$ comes from this type of construction.

Notation: we use $M'$ to denote an arbitrary $M_i$ or $N$.  For
$\alpha<\lambda$, $a[\alpha]$ is the $\alpha$-th component of $a$, and
$a\restrict \alpha=\langle a[i]\rangle_{i<\alpha}$.  We let $a\iseg b$ denote
``$a$ is an initial segment of $b$.''

\begin{lemma}
Every principal type over $M'$ is almost $M$-finite.
\end{lemma}

\begin{proof}

  Let $p$ be principal over $M'$.  Let $p$ be generated by the formulas
  $\{a<x\}\cup\{x<e\mid e\in M', e>a\}$ (the other cases are similar).  Let $d$
  be any realization of $p$.  The type of $d-a$ over $M'$ is generated by
  $\{0<x\}\cup\{x<e\mid e>0\}$ -- the principal type near $0$.  Given any
  $e>0\in M'$, let $\alpha$ be the first index at which $e[\alpha]\ne 0$.  Let
  $c\in Q^{\alpha+1}$ be such that $c[i] = 0$ for $i<\alpha$, and
  $0<c[\alpha]<e[\alpha]$.  Then $0<c<e$.  Thus, $x<c$ implies $x<e$, and
  $d-a<c$, so $\tp(d-a/M')$ is generated by $\tp(d-a)$.

\end{proof}

\begin{definition} Let $p\in S_1(M')$ be non-principal.  $p$ is \emph{reducible}
  if there is $\alpha<\lambda$ such that, for any $a,b\in M'$, if $a\restrict
  \alpha=b\restrict \alpha$ then $x<a\in p\iff x<b\in p$.
\end{definition}

Note that if $p$ is reducible then it is not uniquely realizable, since for
$\alpha$ as in the definition and any $a$ with $\lh(a)>\alpha$ and
$a\restrict\alpha=0$, if $c\sat p$, then $c+a\sat p$.  

\begin{lemma}
If $p\in S_1(M')$ is reducible, then $p$ is $M$-finite.
\end{lemma}

\begin{proof}

  Let $\alpha$ be the least such in the definition of reducible.  For each
  $\beta<\alpha$, we can find $a_\beta\in M$, $a_\beta^+,a_\beta^-$ extending
  $a_\beta$ such that $\lh(a_\beta)=\beta$, $x<a_\beta^+\in p$, and
  $x>a_\beta^-\in p$.  It is easy to see that $\beta<\beta'$ implies
  $a_\beta\iseg a_{\beta'}$.  Let $a=\bigcup_{\beta<\alpha} a_\beta$.  Then
  $a\in M$.

Let $d$ realize $p$, let $e$ be any element of $M'$.  WLOG, assume
$e>d$.  We show $e>d$ is implied by $\tp(d/a)$.

Case 1: $e\restrict \alpha\ne a\restrict \alpha$.  Then $e$ and $a$ differ at
some coordinate $\beta<\alpha$, so $e[\beta]>a[\beta]$, since
$e>d>a\restrict\beta+1$.

If $a>d$, we are done.  Otherwise, by density of $Q$, we can find
$c\in Q^{\beta+1}$ with $c[i]=0$ for $i<\beta$, and
$a[\beta]<a[\beta]+c[\beta]<e[\beta]$.  Again, it is clear that
$a+c>d$, so we are done for this case.

Case 2: $e\restrict \alpha=a\restrict \alpha$.  Since we assume $e>d$, we also
have $a>d$, since $p$ is reducible.  Let $\beta\ge \alpha$ be the first
coordinate at or past $\alpha$ at which $e$ is not $0$ (if $\beta$ does not
exist then $e=a$).  If $e[\beta]>0$, we are done (since $e>a>d$), so let
$e[\beta]<0$.  Choose $c\in Q^{\beta+1}$ such that $c[i]=0$ for $i<\beta$, and
$c[\beta]<e[\beta]<0$.  Then $c+a<e$, but since $(c+a)\restrict
\alpha=e\restrict \alpha$, that means $c+a>d$, so $c+a>x\in \tp(d/a)$, and hence
$e>d$ is implied by $\tp(d/a)$.

\end{proof}

\begin{lemma}
If $p\in S_1(M')$ is non-reducible, then for some $a\in
G^\lambda$, $\tp(a/M')=p$.
\end{lemma}

\begin{proof}

For each $\alpha<\lambda$, by non-reducibility, there are
$a_\alpha^-,a_\alpha^+\in M'$ such that
$a_\alpha^-\restrict\alpha=a_\alpha^+\restrict\alpha$, but $a_\alpha^-<x<a_\alpha^+\in
p$.

Let $a_\alpha=a_\alpha^-\restrict\alpha$.  It is easy to check that
$\alpha<\alpha'$ implies $a_\alpha\iseg a_{\alpha'}$.

Let $a=\bigcup_{\alpha<\lambda}a_\alpha$.  If $a<e$, then at some
component, say $\alpha$, $a[\alpha]<e[\alpha]$.  But
$a\restrict\alpha+1=a^+_{\alpha+1}$, so $a^+_{\alpha+1}<e$, so $x<e\in
p$.

The case $e<a$ is symmetric.  Thus, $\tp(a/M')=p$.

\end{proof}

Given a sequence, $a$, we say that ``$a$ has cofinal components with property
$P$'' if for any $\lambda<\lh(a)$, there is $\kappa>\lambda$ such that
$a[\kappa]$ has property $P$.

\begin{lemma}
Let $d\in G^\lambda$ realize a non-reducible type over $M'$ without
cofinal components in $Q'$.  Then $\tp(d/M')$ is $M$-finite.
\end{lemma}

\begin{proof}

  For some $m<\lambda$, $b=d\restrict m$ has all the components of $d$ in $Q'$.
  Note that $b\in M$.  Given any $e\in M'$ with $x<e\in\tp(d/M')$, let $n$ be
  the first index at which $d$ and $e$ differ.

If $n<m$, let $c\in Q^{n+1}$ be such that $c[i]=0$ for $i<n$, and
$0<c[n]<e[n]-b[n]$.  Then $x<b+c$ is in $\tp(d/b)$, and $b+c<e$.

If $n\ge m$, then choose $c\in Q^{n+1}$ such that $c[i]=0$ for $i<m$,
$c[i]=a[i]$ for $m\le i<n$, and $d[n]<c[n]<e[n]$.  Then $x<b+c$ is in $\tp(d/b)$
and $b+c<d$.

The $e<x$ case is symmetric.

\end{proof}

\begin{lemma} If $d\in G^\lambda\setminus M'$ has cofinal components in $Q'$ ,
  then $\tp(d/M')$ is not $M$-finite.  Thus, if every $b\in\dcl(dM')\setminus
  M'$ has cofinal components in $Q'$, then $d$ is not almost $M$-finite.
\end{lemma}

\begin{proof}

  Assume for a contradiction that $\tp(d/M')$ is $M$-finite.  Let $\bar
  b=(b_1,\ldots,b_m)\in M^m$ witness this, of minimal length (as a tuple).

For any $a\in M'$, we can find $f(\bar b)$, with $f$
$\emptyset$-definable, such that $f(\bar b)$ lies between $d$ and $a$.
Considering $d\restrict i$, for $i<\lambda$, we can find $\{f_i(\bar
b)\}_{i<\lambda}$ with $f_i(\bar b)\restrict i = d\restrict i$.

By quantifier elimination for divisible ordered abelian groups, we know that
each $f_i(\bar b)$ is an affine linear combination (with rational coefficients)
of the $b_j$'s, with the affine part given by $c\in Q^{<\lambda}$.  If we take
$\alpha=\max(\lh(b_j)\mid j\le m)$, then for any $\beta$, $f_\beta(\bar b)$ can
have no components in $Q'$ past the $\alpha$th one.  But this is clearly
impossible.

\end{proof}

This completes our proof that $N$ is a maximal small extension of $M$.  $N$ is
certainly a proper extension of $M$, since any element with cofinal components
in $Q'$ can be adjoined to form $M_1$.  It remains to determine its size.  We
lose nothing by restricting to the case where $\lambda$ is an infinite cardinal.
We consider $G^\lambda$ and its divisible subgroups as $\QQ$-vector spaces.

\begin{claim}\label{betaclaim}
Let $\beta$ be a cardinal such that there exist linearly independent
$\{a_i\}_{i<\beta}\in G^\lambda$ with $(M+
Q^\lambda)\oplus\Span(\{a_i\}_{i<\beta})=G^\lambda$.  Then in the construction
above we can ensure that $|N|=\beta+|M|$.
\end{claim}
\begin{proof}
We show the claim by showing that we can take $a_i$ to be the element adjoined
to $M_i$ to produce $M_{i+1}$, for every $i<\beta$.  For any such $i$, the
element $a_i$ has the property that every element of $\dcl(Ma_{\le
  i})\setminus\dcl(Ma_{<i})$ has cofinal components in $Q'$, since otherwise that
element would be in the span of $M+Q^\lambda+\Span(\{a_j\}_{j<i}\})$, and
$\Span(\{a_j\}_{j<\beta})$ is linearly independent from $M+Q^\lambda$.
\end{proof}

Let $W$ be a divisible subgroup of $G$ such that $G=Q\oplus W$.  Then
$G^\lambda=Q^\lambda\oplus W^\lambda$.  Let $\gamma=|W|$.

\begin{claim}
$\dim W^\lambda=\gamma^\lambda$.
\end{claim}
\begin{proof}
Note that $|W^\lambda|\ge\gamma^\lambda\ge 2^{\aleph_0}$, since any element can
be uniquely written as a $\lambda$-sequence of elements of $W$.  Since we are
considering $W^\lambda$ as a vector space over $\QQ$, a countable field, it
follows that $\dim(W^\lambda)=|W^\lambda|$.
\end{proof}

We can write $W^\lambda=W^{<\lambda}\oplus X$, for some divisible subgroup $X$
of $W^\lambda$.

\begin{claim}\label{xworksclaim}
$(M+Q^\lambda)\oplus X=G^\lambda$.
\end{claim}
\begin{proof}
\begin{multline*}
G^\lambda=Q^\lambda\oplus W^\lambda=(Q^\lambda\oplus W^{<\lambda})\oplus
  X\\=((Q^{<\lambda}+W^{<\lambda})+Q^\lambda)\oplus
  X=(G^{<\lambda}+Q^\lambda)\oplus X=(M+Q^\lambda)\oplus X.
\end{multline*}
\end{proof}

This implies that we may let the desired sequence $\{a_i\}_{i<\beta}$ be given
by a basis for $X$.

\begin{claim}\label{xdimclaim}
$\dim X=\gamma^\lambda$.
\end{claim}

\begin{proof}
We construct a set of independent (even over $W^{<\lambda}$) elements of
$W^\lambda$, with size $\gamma^\lambda$ and each element of length $\lambda$,
showing that $\dim X\ge\gamma^\lambda$, which is enough.

Since $\lambda\times\lambda=\lambda$, we can find $\lambda$ disjoint subsets of
$\lambda$ of length $\lambda$ (necessarily cofinal).  Let $\{X_i\mid
i<\lambda\}$ be the characteristic functions of these subsets -- each $X_i$ is a
binary sequence of length $\lambda$.  For $b\in W$, let $bX_i$ denote the
element of $W^\lambda$ obtained by replacing each $1$ in the sequence $X_i$ by
$b$.

For $f\in W^\lambda$, let $A_f=\sum_{i<\lambda} f(i)X_i$.  This sum is
well-defined, because no two $X_i$s are non-zero on the same component.  We know
that there is a basis of $W^\lambda$ of size $\gamma^\lambda$, say
$\{f_j\}_{j<\gamma^\lambda}$.  Denote $A_{f_j}$ by $A_j$.  We show that
$\{A_j\mid j<\gamma^\lambda\}$ is linearly independent and its span is disjoint
from $W^{<\lambda}\setminus\{0\}$.  Without loss of generality, it is enough to
show that no non-zero linear combination of $A_1,\ldots,A_n$ is in
$W^{<\lambda}$.

Suppose that $q_1A_1+\ldots+q_nA_n=c$, where $q_j\in\QQ$, $c\in W^{<\lambda}$.
This then implies that $\sum_{i<\lambda} (\sum_{j\le n} q_jf_j(i))X_i=c$.  Fix
$i<\lambda$.  If $k,l\in X_i$, then it is clear that the left-hand side has the
same value at its $k$ and $l$ components.  But if we choose $k<\lh(c)<l$, then
the $l$th component must be $0$, so the $k$th component is too.  Since this
holds for every $i<\lambda$ and $k<\lh(c)$, this implies $c=0$.  But this
implies $\sum_{j\le n} q_jf_j=0$, and so $q_j=0$, $j\le n$, and hence the $A_i$s
are linearly independent.

\end{proof}

Claim \ref{betaclaim} applied to a basis of $X$, using Claims \ref{xworksclaim}
and \ref{xdimclaim}, shows that $|N|=|M|+\gamma^\lambda$, where $\gamma$ is the
cardinality of $W$, a divisible subgroup of $G$ with $Q\oplus W=G$.  The next
section shows how this implies the cardinality statements of the proposition.
\end{proof}

\section{Cardinalities}
When $\lambda=\aleph_0$, $G$ is the real closure of $\QQ$, and $Q=\QQ$, then
$|M|=\aleph_0$ and $|N|=2^{\aleph_0}$: the bound is as sharp as possible.  In
general, for any $\alpha$ an elementary compactness argument shows there exist
$G$ and $Q$ such that $|G|=|W|=\alpha$.  If we take $\lambda$ to be $\aleph_0$,
then $|M|=\alpha$.  However, while $N$ exists, it is possible that $|N|=|M|$,
since $\alpha^{\aleph_0}$ may be $\alpha$.  Note, though, that $N$ is still a
proper maximal small extension of $M$, examples of which were not known before.

$M$ can have any cardinality of the form $\alpha^{<\lambda}$ for any two
infinite cardinals $\alpha,\lambda$.  Then $N$ can have cardinality at least
$\alpha^\lambda$.  This corresponds to the full tree of height $\lambda$ with
$\alpha$-many branchings at each node.  Note that the definition of $\Ded(-)$
can be rephrased in terms of trees, so that in fact $\Ded(\beta)$ is the $\sup$
of the cardinalities of the completions of trees of cardinality $\beta$.  It is
clear that if a cardinal $\beta$ is of the form $\alpha^{<\lambda}$, then
$\Ded(\beta)=\alpha^\lambda$.  Thus, for such cardinals $\beta$, this
construction shows that the maximal small extension has cardinality
$\Ded(\beta)$.

In general, it is not hard to see that the construction above can be done with
$G^\lambda$ replaced by $\prod_{i<\lambda} G_i$, where each $G_i$ is a divisible
ordered abelian group, $Q^\lambda$ replaced by $\prod_{i<\lambda} Q_i$, where
each $Q_i$ is a dense divisible subgroup of $G_i$, and $M$ the subgroup
consisting of all elements of $N$ of length $<\lambda$.  It is an open question,
potentially independent from ZFC, whether such a configuration can actually
witness the Dedekind number for every cardinal.  It would not if there were some
cardinal, $\alpha$, with $\alpha\ne |M|$ for any such $M$, but
$\Ded(\alpha)>|N|$ for any corresponding $N$.  That would require that
$\Ded(\alpha)$ be witnessed by a highly asymmetric tree, but no results are
known on the types of trees that are needed to witness $\Ded$.

\bibliography{../janak}{}
\bibliographystyle{halpha}







\end{document}